\newtheorem{theorem}{Theorem}  
\newtheorem{lemma}[theorem]{Lemma}
\newtheorem{proposition}[theorem]{Proposition}
\newtheorem{example}[theorem]{Example}
\newtheorem{corollary}[theorem]{Corollary}
\newtheorem{remar}[theorem]{Remark}
\renewenvironment{proof}{Proof:\ \ \ }{\QED}
\newcommand{\QED}{\qed}
\newcommand{\bfind}[1]{\index{#1}{\bf #1}}
\newcommand{\n}{\par\noindent}
\newcommand{\sn}{\par\smallskip\noindent}
\newcommand{\mn}{\par\medskip\noindent}
\newcommand{\bn}{\par\bigskip\noindent}
\newcommand{\pars}{\par\smallskip}
\newcommand{\parm}{\par\medskip}
\newcommand{\R}{\mathbb R}
\newcommand{\N}{\mathbb N}
\newcommand{\cA}{\mathcal A}
\newcommand{\cB}{\mathcal B}
\newcommand{\cS}{\mathcal S}
\newcommand{\cN}{\mathcal N}
\newcommand{\cP}{\mathcal P}
\newcommand{\fun}{\mbox{\rm f-un}}
\renewcommand{\int}{\mbox{\rm int}}
\newcommand{\cal}{\mathcal}
\newcommand{\oc}[1]{{\rm \bf \,|\,#1\,|{}}}
\newcommand{\cc}{{\cal C}}
\newcommand{\mcc}[3]{{ [#1,#2]_{{\rm\bf  #3}}}}
\newcommand{\bdm}{\begin{displaymath}}
\newcommand{\edm}{\end{displaymath}}
\newcommand{\timestring}{\begingroup
   \count0 = \time
   \divide\count0 by 60
   \count2 = \count0   
   \count4 = \time
   \multiply\count0 by 60
   \advance\count4 by -\count0   
   \ifnum\count4<10
      \toks1 = {0}%
   \else
      \toks1 = {}%
   \fi
   \ifnum\count2<10
      \toks0 = {0}%
   \else
      \toks0 = {}%
   \fi
   \ifnum\count2=0
      \toks0 = {00}
   \fi
  \the\toks0 \number\count2:\the\toks1 \number\count4 \thinspace
\endgroup}%
\begin{document}
\title{Construction of ball spaces and the notion of continuity}
\author[Bartsch, Kuhlmann and Kuhlmann]{Ren\'e Bartsch, Katarzyna Kuhlmann and Franz-Viktor Kuhlmann}
\address{A.-v.-Humboldt-Gymnasium, Makarenkostr. 54, 17491 Greifswald, Germany}
\email{math@marvinius.net}
\address{Institute of Mathematics, ul.~Wielkopolska 15, 70-451 Szczecin, Poland}
\email{Katarzyna.Kuhlmann@usz.edu.pl}
\address{Institute of Mathematics, ul.~Wielkopolska 15, 70-451 Szczecin, Poland}
\email{fvk@usz.edu.pl}

\date{October 8, 2018}
%
\subjclass[2010]{Primary 54A05; Secondary 54H25, 12J15, 03E20, 18B05}

\begin{abstract}
Spherically complete ball spaces provide a framework for the proof of generic fixed point theorems. For the
purpose of their application it is important to have methods for the construction of new spherically complete
ball spaces from given ones. Given various ball spaces on the same underlying set, we discuss the construction of
new ball spaces through set theoretic operations on the balls. A definition of continuity for functions on ball
spaces leads to the notion of quotient spaces. Further, we show the existence of products and coproducts and use
this to derive a topological category associated with ball spaces.
\end{abstract}
\maketitle


%
%
\section{Introduction}
Ball spaces have been introduced in \cite{[KK1]} as a framework for the proof of generic fixed point theorems for
functions which in some way are contracting. Since then, the development of their theory and their applications has led to several articles (\cite{[KKu],[KK2],[KK3],[KKP],[KKSh],[KKSo]}).

A \bfind{ball space} is a pair $(X,\cB)$ consisting of a nonempty set
$X$ and a nonempty set of distinguished nonempty subsets $B$ of $X$.
If we denote the power set of $X$ by $\cP(X)$, this means that
\[
\emptyset\>\ne\>\cB \>\subseteq \>\cP(X)\setminus\{\emptyset\}\>.
\]
The elements $B$ of $\cB$ will be called \bfind{balls}, in analogy to the case of metric (or ultrametric) balls.
In analogy to the case of ultrametric spaces, we will call a nonempty collection $\cN$ of balls in
$\cB$ a \bfind{nest of balls (in $\cB$)} if it is totally ordered by inclusion. We will say that $(X,\cB)$
is \bfind{spherically complete} if the intersection $\bigcap\cN$ of each nest $\cN$ of balls in $\cB$ is nonempty.

\pars
Ball spaces can be derived from various settings, such as metric spaces, ultrametric spaces, topological spaces
(where it is convenient to take the balls to be the nonempty closed sets), partially ordered sets, lattices.
The generic fixed point
theorems that hold in spherically complete ball spaces then allow specializations to all of these settings,
in which spherically completeness is equivalent to natural completeness or compactness properties
(cf.\ \cite{[KK3]}). In
addition, the general framework allows transfers of concepts and approaches between these various settings
(again, cf.\ \cite{[KK3]}).

\pars
There are many different procedures to construct a new ball space $(X,\cB)$ from a given set of ball spaces
$(X_i,\cB_i)$, $i\in I$. We will discuss several of them in this paper. Since most of the generic fixed point
theorems work for spherically complete ball spaces, it is an important question under which conditions the spaces
$(X_i,\cB_i)$ being spherically complete implies that so is $(X,\cB)$. An easy example for this to work is when we
take the union of the sets of balls of two ball spaces on the same set $X$ (see Proposition~\ref{B1cupB2}).
In certain cases we may need stronger forms of spherical completeness, which we will introduce in
Section~\ref{secthier}. This for instance happens when we are given a single ball space $(X,\cB)$ and want to
replace $\cB$ by the set of all finite unions of balls in $\cB$ (see Theorem~\ref{f-un}). For the study of further
operations on the set of balls and the connection with topologies, see \cite{[KK3]}.

Returning to the case of several ball spaces on a given set $X$, we will discuss in Section~\ref{sectbb} the
particularly interesting natural example of ordered abelian groups and fields. For the case of fields,
S.~Shelah introduced in \cite{[S]} the notion of \bfind{symmetrical completeness} which means that for every
Dedekind cut, the cofinality of the left cut set is different from the coinitiality of the right cut set (as is
the case in the reals). He showed that arbitrarily large symmetrically complete ordered fields exist. With a
different construction idea, the existence result is reproven in \cite{[KKSh]} and generalized to the case of
ordered abelian groups and ordered sets. For the ordered sets, the existence result in fact follows already
from the comprehensive work of Hausdorff. It is also shown in \cite{[KKSh]} that
symmetrical completeness is equivalent to the spherical completeness of the induced order ball space, which is
defined as follows. If $(I,<)$ is any nonempty totally ordered set, then we take $\cB_o$ to consist of all
closed bounded intervals $[a,b]$ with $a,b\in I$, $a\leq b$. We call $(I,\cB_o)$ the \bfind{order ball space}
associated with $(I,<)$.

On the other hand, non-archimedean ordered abelian groups and fields carry a nontrivial natural valuation (cf.\
Section~\ref{sectnatval}), which induces an ultrametric ball space (cf.\ Section~\ref{sectbultval}). This ball
space is always spherically complete when the order ball space is, but the two spaces are distinct. It should be
noted that the balls in the ultrametric ball space are precisely all cosets of the principal convex subgroups;
here a convex subgroup is called principal if it is the smallest among all convex subgroups containing a fixed
element.

We can apply
Proposition~\ref{B1cupB2} to these two ball spaces, but we cannot apply Theorem~\ref{f-un} as the order ball space
and hence also the union of the two ball spaces does not satisfy the necessary requirements. However, in
Section~\ref{sectbb} we prove:
\begin{theorem}                 \label{thmbb}
Take a symmetrically complete ordered group or field $G$ and $\cB$ to be the set
of all convex sets in $G$ that are finite unions of closed bounded
intervals and ultrametric balls. Then $(G,\cB)$ is spherically complete.
\end{theorem}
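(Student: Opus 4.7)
My plan is to argue by contradiction using symmetric completeness. Assume a nest $\cN\subseteq\cB$ has $\bigcap\cN=\emptyset$. For each $B\in\cN$ set $L_B:=\{g\in G:g<B\}$ and $R_B:=\{g\in G:g>B\}$, and let $L:=\bigcup_{B\in\cN}L_B$ and $R:=\bigcup_{B\in\cN}R_B$. The nest property together with convexity of each $B$ yields $L<R$ and $L\cap R=\emptyset$, and $\bigcap\cN=\emptyset$ forces $L\cup R=G$, so $(L,R)$ is a Dedekind cut of $G$; by symmetric completeness of $G$, $\mathrm{cof}(L)\neq\mathrm{coi}(R)$. The aim is now to derive a contradiction by forcing these two invariants to coincide.

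The next step is to thin $\cN$ to a cofinal descending chain $(B_\xi)_{\xi<\mu}$ of minimal regular length $\mu$, so that $L=\bigcup_\xi L_{B_\xi}$, $R=\bigcup_\xi R_{B_\xi}$, and $\bigcap_\xi B_\xi=\emptyset$. Each $B_\xi$, being a convex finite union, has a leftmost piece in its decomposition which is either a closed bounded interval (so $\min B_\xi\in G$) or an ultrametric ball $g_\xi^L+H_\xi^L$ (so $\inf B_\xi$ is a cut not attained in $B_\xi$); analogously on the right. By pigeonhole, pass to a further cofinal subchain of length $\mu$ along which the left-end and right-end types are fixed, and each of the cuts $\alpha_\xi:=\inf B_\xi$ and $\beta_\xi:=\sup B_\xi$ is either eventually constant or strictly monotone through the whole chain.

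Now the case analysis. If both endpoints are closed intervals, $B_\xi=[\min B_\xi,\max B_\xi]\in\cB_o$ and spherical completeness of the order ball space (equivalent to symmetric completeness of $G$) contradicts $\bigcap\cN=\emptyset$; more generally, if $\min B_\xi$ or $\max B_\xi$ stabilizes, the stable value lies in $\bigcap\cN$. If the leftmost piece is ultrametric and $\alpha_\xi$ stabilizes at a cut $\alpha$, then the leftmost ultrametric pieces $U_\xi^L$ all have infimum $\alpha$, and hence share elements arbitrarily just above $\alpha$; the crucial fact that any two ultrametric balls of $G$ are either nested or disjoint (because convex subgroups of $G$ form a chain) then forces them to be pairwise nested, giving a nest in the ultrametric ball space. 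Its spherical completeness yields a nonempty intersection inside $\bigcap\cN$, a contradiction; the symmetric argument handles a stabilizing right side. Only the residual case survives: $\alpha_\xi$ strictly increases and $\beta_\xi$ strictly decreases through the full $\mu$-chain. Picking witnesses $\ell_\xi$ in $L_{B_{\xi+1}}\setminus L_{B_\xi}$ and $r_\xi$ in $R_{B_{\xi+1}}\setminus R_{B_\xi}$ and exploiting regularity of $\mu$ shows these provide cofinal (resp.\ coinitial) subsets of $L$ and $R$ that cannot be replaced by anything shorter, whence $\mathrm{cof}(L)=\mu=\mathrm{coi}(R)$, contradicting symmetric completeness.

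The principal obstacle I anticipate lies in the stabilizing-cut subcases: one must ensure that the leftmost (or rightmost) ultrametric pieces chosen across the chain really assemble into a nest. An arbitrary finite-union decomposition of $B_\xi$ need not cooperate---the ``leftmost piece'' is not uniquely determined---so the pieces should be selected canonically, for instance by taking $U_\xi^L$ to be the largest ultrametric ball contained in $B_\xi$ with infimum $\alpha$, and one must verify that such maximal pieces exist and respect inclusion as $\xi$ varies. Here the chain structure of the convex subgroups of $G$ and the nested/disjoint dichotomy for ultrametric balls do the essential work, but the bookkeeping is the delicate point of the proof.
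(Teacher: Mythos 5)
Your proposal is correct in substance, but it follows a genuinely different route from the paper's. The paper first establishes a normal form: every convex finite union of closed bounded intervals and ultrametric balls is a ``bar-bell'' $B_\alpha(a)\cup[a,b]\cup B_\beta(b)$; it then handles a nest of bar-bells by a single dichotomy --- either some nest of ultrametric balls already captures the intersection (invoke spherical completeness of $(G,\cB_u)$), or one extracts a cofinal subsequence with $B_{i_{\nu+1}}\subsetneq[a_{i_\nu},b_{i_\nu}]\subseteq B_{i_\nu}$, reducing to a nest of closed intervals and invoking spherical completeness of $(G,\cB_o)$. You bypass the normal form entirely, attach to the nest the Dedekind cut $(L,R)$ of its left and right complements, and argue by cases on the endpoint cuts, using the raw cofinality-versus-coinitiality form of symmetric completeness in the residual case; your residual-case computation $\mathrm{cof}(L)=\mu=\mathrm{coi}(R)$ essentially re-derives inline the spherical completeness of $\cB_o$ that the paper cites. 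What your approach buys is independence from the bar-bell decomposition; what the paper's buys is a shorter case analysis. Two details: first, the ``delicate bookkeeping'' you flag is not actually needed --- any ultrametric-ball constituent of $B_\xi$ whose lower cut set equals that of $B_\xi$ will do, because any two convex sets with the same lower cut set must meet (take $u_1$ in one, find $u_2\le u_1$ in the other, then $u_1'\le u_2$ back in the first, and use convexity), and meeting ultrametric balls are automatically nested, so no coherent choice across $\xi$ is required; such a constituent exists and is a non-singleton ultrametric ball exactly when $B_\xi$ has no minimum, the complementary situation being your stabilizing-minimum subcase. Second, you should record that $L$ and $R$ are nonempty (each ball in $\cB$ is either bounded or all of $G$), so that $(L,R)$ really is a Dedekind cut. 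With these points filled in, the argument is sound.
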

\n
We also show that the convex sets appearing in the theorem can always be represented as two distinct ultrametric balls connected by a closed interval (hence we gave them the nickname ``bar-bells'').

\sn
{\bf Open question:} \ {\it Does the theorem also hold if the condition ``convex'' is removed?}

\parm
Take two ball spaces $(X,\cB)$ and $(X',\cB')$ and a function $f:X\rightarrow X'\,$. The question arises under
which conditions $f$ transfers spherical completeness from one side to the other. In analogy to the case of
topological spaces, we will call $f$ \bfind{ball continuous} if the preimage of every ball in $\cB'$ is a
ball in $\cB$. Then the following holds:
\sn
{\it If $f$ is ball continuous and $(X,\cB)$ is spherically complete, then so is $(X',\cB')$.}
\sn
This result is part of Theorem~\ref{sphctransf} in Section~\ref{sectcon}, in which we study further conditions for
the transfer of spherical completeness. It provides the necessary background for the definition of the
notion of ``quotient ball space'' in the same section.

\pars
Having the notion of ball continuity at hand, we define in Section~\ref{sectprodco} the category of ball spaces,
where the morphisms are the ball continuous functions. This leads to further constructions of new ball spaces from given ones: products and coproducts. We will show that products and coproducts of spherically complete ball spaces
are again spherically complete (see Theorem~\ref{sphcprod}).

\parm
The final result of our paper (see Section~\ref{topcat}) is inspired by the fact that in many situations the
behaviour of ball spaces reminds us of
topological spaces, and that indeed many inspiring examples come from a topological context. Hence in order to
provide topological methods, we derive from the category of ball spaces the category of augmented ball spaces
by adding the full space and the empty set to the sets of balls. We prove:
\begin{theorem}                            \label{abstopcat}
The category of augmented ball spaces is topological.
\end{theorem}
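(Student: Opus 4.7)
My plan is to verify the standard concrete-category criterion: the forgetful functor $U\colon (X,\mathcal{B})\mapsto X$ from augmented ball spaces to $\mathbf{Set}$ is faithful, has small fibres, and every $U$-structured source $(f_i\colon X \to U(X_i,\mathcal{B}_i))_{i\in I}$ admits a unique $U$-initial lift. Faithfulness is automatic because ball continuous maps are genuine set maps, and small fibres are immediate since any augmented ball space structure on $X$ is a subset of $\mathcal{P}(X)$. Hence the real content lies in producing unique initial lifts for arbitrary (possibly class-indexed) sources.

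For existence, given a source as above I would set
\[
\mathcal{B} \;:=\; \{\emptyset, X\} \;\cup\; \{\, f_i^{-1}(B) : i \in I,\ B \in \mathcal{B}_i\,\}.
\]
By augmentation of each codomain we have $f_i^{-1}(X_i)=X$ and $f_i^{-1}(\emptyset)=\emptyset$, so $(X,\mathcal{B})$ is itself an augmented ball space, and every $f_i$ is ball continuous by construction. For the universal property, the standard identity
\[
g^{-1}(f_i^{-1}(B)) \;=\; (f_i \circ g)^{-1}(B)
\]
shows that a map $g\colon (Y,\mathcal{B}_Y) \to (X,\mathcal{B})$ is ball continuous if and only if each composite $f_i\circ g$ is ball continuous; here the preimages of $\emptyset$ and $X$ give no extra condition since $\mathcal{B}_Y$ already contains them by augmentation. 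Uniqueness of the lift then follows in the usual way: for any second initial structure $\mathcal{B}'$ on $X$ for the same source, applying the universal property to $\mathrm{id}_X$ in both directions makes $\mathrm{id}_X\colon (X,\mathcal{B}) \to (X,\mathcal{B}')$ and its inverse simultaneously ball continuous, forcing $\mathcal{B}=\mathcal{B}'$.

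There is no serious technical obstacle; the one conceptual point worth flagging is the role of the augmentation. In the unaugmented category, preimages $f_i^{-1}(B)$ can drop to $\emptyset$ (for instance under a map that misses a ball, or under the empty source, where the initial lift must still exist) or expand to all of $X$, and neither would then be an admissible ball. Adding $\{\emptyset, X\}$ to every structure is precisely the minimal adjustment that closes the class of ball spaces under the formation of initial structures for arbitrary sources, which is exactly what the definition of topological category over $\mathbf{Set}$ requires.
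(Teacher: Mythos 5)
Your proof is correct and follows essentially the same route as the paper: the same initial structure $\{f_i^{-1}(B)\mid i\in I,\ B\in\mathcal{A}_i\}$ (your explicit adjunction of $\{\emptyset,X\}$ changes nothing when $I\neq\emptyset$, and has the small advantage of also covering the empty source), the same identity $g^{-1}(f_i^{-1}(B))=(f_i\circ g)^{-1}(B)$ for the universal property, and the same fibre-smallness argument via $\mathcal{P}(\mathcal{P}(X))$. The only omission is that the paper's definition of a topological category (following Preuss) includes a third condition not covered by your criterion, namely that sets with at most one element carry exactly one structure; this is immediate here, since the only augmented ball space on $\emptyset$ is $(\emptyset,\{\emptyset\})$ and the only one on a singleton $X$ is $(X,\{\emptyset,X\})$.
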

\n
For definitions and details, see Section~\ref{topcat}. Note that this result implies that the category can be
embedded in a cartesian closed topological category, providing natural function spaces; see e.g.\
\cite{[Schwarz]}.

%
%
\section{Preliminaries}                \label{sectprel}
%
%
\subsection{A hierarchy of ball spaces}      \label{secthier}
In \cite{[KK3]} we introduce and study the following hierarchy of spherical completeness properties:
\sn
{\bf S}$_1$: The intersection of each nest in $(X,\cB)$ is nonempty.
\sn
{\bf S}$_2$: The intersection of each nest in $(X,\cB)$ contains a ball.
\sn
{\bf S}$_3$: The intersection of each nest in $(X,\cB)$ contains a
largest ball.
\sn
{\bf S}$_4$: The intersection of each nest in $(X,\cB)$ is a ball.
\sn
{\bf S}$_i^c$: The same as {\bf S}$_i$, but with ``centered system'' in place of ``nest''.
\sn
Here, a \bfind{centered system of balls} is a
collection of balls such that the intersection of any finite number of
balls in the collection is nonempty.

\bn
\subsection{Ball spaces associated with ultrametric spaces and valuations} \label{sectbultval}
An \bfind{ultrametric space} is a pair $(X,u)$ where $X$ is a set and $u:X\times X\to\Gamma$ with
$\Gamma$ a totally ordered set with largest element $\infty$, such that:

\sn
{\bf (UM1)} $u(x,y)=\infty$ if and only if $x=y$;\n
{\bf (UM2)} $u(x,y)=u(y,x)$;\n
{\bf (UM3)} $u(x,z)\ge\min\{u(x,y),u(y,z)\}$.
\sn
(UM3) is called the \bfind{ultrametric triangle law}. The image
\[
uX:=u(X\times X)\setminus\{\infty\}
\]
is called the \bfind{value set}
of $u$. A generalization of the notion of ultrametric space works with partially ordered value sets $uX$ in place
of totally ordered ones (see \cite{[PR1],[PR2],[KK3],[KKu]}), but we will not need this generalization in the
present paper.

\pars
One frequent source of ultrametrics are valuations: if $v$ is a valuation on a field or an
abelian group, then one may define $u(a,b):=v(a-b)$. With this definition, the above axioms are satisfied if $v$
is written as a Krull valuation, that is, it satisfies the following axioms:
\sn
{\bf (V1)} $vx=\infty$ if and only if $x=0$;\n
{\bf (V2)} $v(x-y)\ge\min\{vx,vy\}$ \quad (ultrametric triangle law).
\sn
Note that for valuations on fields we require in addition that $\Gamma$ is an ordered abelian group
together with $\infty$ (an element larger than all elements of the group) and the following axiom holds:
\sn
{\bf (VH)} $v(xy)=vx+vy$ \quad (homomorphism law).
\sn

\pars
Take an ultrametric space $(X,u)$. For $x\in X$ and $\gamma\in uX\cup\{\infty\}$, the set
\[
B_\gamma(x)\>:=\>\{x\in X\mid u(x,y)\ge\gamma\}
\]
is called the \bfind{closed ball of radius $\gamma$ around $x$}. Further, we define
\[
B(x,y)\>:=\>B_{u(x,y)}(x)\>=\>B_{u(x,y)}(y)\>,
\]
where the latter equality follows from the fact that in an ultrametric space every element of a ball is its center.
If $X$ is nonempty, then $(X,\cB_u)$ with
\[
\cB_u\>:=\>\{B(x,y)\mid x,y\in X\}
\]
is a ball space, which we call the \bfind{ultrametric ball space}.

From the ultrametric triangle law it follows that any two balls in $\cB_u$ are already comparable by inclusion once
they have a nonempty intersection. It follows that every centered system of balls in $\cB_u$ is in fact a nest of
balls.

\bn
\subsection{Nonarchimedean orderings and the natural valuation} \label{sectnatval}
Take an ordered abelian group $(G,<)$. Two elements $a,b\in G$ are
called \bfind{archime\-dean equivalent} if there is some $n\in\N$ such
that $n|a|\geq |b|$ and $n|b|\geq |a|$. The ordered abelian group
$(G,<)$ is archimedean ordered if all nonzero elements are archimedean
equivalent. If $0\leq a<b$ and $na<b$ for all $n\in\N$, then we say that
``$a$ is infinitesimally smaller than $b$''.
We denote by $va$ the archimedean equivalence class of $a$. The set of
archimedean equivalence classes can be ordered by setting $va>vb$ if and
only if $|a|<|b|$ and $a$ and $b$ are not archimedean equivalent, that
is, if $n|a|<|b|$ for all $n\in\N$. We write $\infty:=v0\,$; this is the
maximal element in the linearly ordered set of equivalence classes. The
function $a\mapsto va$ is a group valuation on $G$, i.e., it satisfies
(V1) and (V2). By definition,
\[
0\leq a\leq b\>\Longrightarrow\>va\geq vb\;.
\]
The set $vG:=\{vg\mid 0\ne g\in G\}$ is called the \bfind{value set} of
the valued abelian group $(G,v)$.

\pars
If $(K,<)$ is an ordered field, then we consider the natural
valuation on its ordered additive group and define $va+vb:=v(ab)$.
This turns the set of archimedean classes into an ordered abelian group,
with neutral element $0:=v1$ and inverses $-va=v(a^{-1})\,$. In this
way, $v$ becomes a field valuation.

\pars
As shown in Section~\ref{sectbultval} above, the natural valuation of an ordered
abelian group or ordered field induces an ultrametric ball space $\cB_u\,$.

\bn
%
%
\section{Hybrid ball spaces}                \label{secthy}
In this section we investigate the following question:
\sn
{\it Given two spherically complete ball spaces $(X,\cB_1)$ and $(X,\cB_2)$ on the same set $X$,
which operations of forming new balls from the balls in $\cB_1\cup\cB_2$
will preserve spherical completeness?}
\sn

A first step is provided by the following proposition; the easy proof is left to the reader.
\begin{proposition}                                  \label{B1cupB2}
If $(X,\cB_1)$ and $(X,\cB_2)$ are {\bf S}$_1$ ball spaces, then so is the ball space $(X,\cB_1\cup\cB_2)$. The
same holds with {\bf S}$_2$ or {\bf S}$_4$ in place of {\bf S}$_1\,$.
\end{proposition}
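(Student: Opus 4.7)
The plan is to take an arbitrary nest $\cN$ in $(X,\cB_1\cup\cB_2)$ and reduce the question about $\bigcap\cN$ to the corresponding question about a nest living entirely inside one of the two given ball spaces. Accordingly, I would partition $\cN$ as $\cN_1\cup\cN_2$, where $\cN_i:=\cN\cap\cB_i$. Each $\cN_i$ inherits total ordering by inclusion from $\cN$, so whenever it is nonempty it is a nest in $\cB_i$. If one of the $\cN_i$ is empty, then $\cN$ lives in the other ball space and the desired property ({\bf S}$_1$, {\bf S}$_2$, or {\bf S}$_4$) follows immediately from the hypothesis on that space, using that any ball of $\cB_i$ is also a ball of $\cB_1\cup\cB_2$.

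The substantive case is when both $\cN_1$ and $\cN_2$ are nonempty. Here I would exploit that every $B\in\cN_1$ and every $B'\in\cN_2$ are comparable by inclusion, because both sit in the totally ordered family $\cN$, and split on a dichotomy. Either (A) for every $B\in\cN_1$ there is some $B'\in\cN_2$ with $B'\subseteq B$, in which case $\bigcap\cN_2\subseteq\bigcap\cN_1$ and hence $\bigcap\cN=\bigcap\cN_2$; or (B) there exists $B_0\in\cN_1$ such that no $B'\in\cN_2$ is contained in $B_0$, whence by comparability $B_0\subseteq B'$ for every $B'\in\cN_2$, so $\bigcap\cN_1\subseteq B_0\subseteq\bigcap\cN_2$ and therefore $\bigcap\cN=\bigcap\cN_1$. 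In either alternative, $\bigcap\cN$ equals the intersection of a nest sitting in a single ball space $\cB_i$, so the conclusion (nonempty for {\bf S}$_1$; contains a ball for {\bf S}$_2$; is a ball for {\bf S}$_4$) transfers directly from the corresponding hypothesis on $\cB_i$.

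The only real subtlety, and the point I would expect to be the main obstacle, is verifying the dichotomy correctly: the negation of (A) yields some $B_0\in\cN_1$ such that no $B'\in\cN_2$ lies inside $B_0$, and one must invoke comparability of balls from $\cN$ to upgrade this to $B_0\subseteq B'$ for all such $B'$. This step also makes transparent why the proposition omits {\bf S}$_3$: in case (A), a largest ball of $\bigcap\cN_2$ drawn from $\cB_2$ need not be largest among balls from $\cB_1\cup\cB_2$ contained in $\bigcap\cN$, since there could be a strictly larger ball from $\cB_1$ still sitting inside $\bigcap\cN_1$.
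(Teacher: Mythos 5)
Your proof is correct: the dichotomy on whether $\cN_2$ is coinitial in $\cN_1$ cleanly reduces $\bigcap\cN$ to the intersection of a nest lying in a single $\cB_i$, from which {\bf S}$_1$, {\bf S}$_2$ and {\bf S}$_4$ transfer, and your closing remark about {\bf S}$_3$ matches the counterexample sketched in the paper. The paper explicitly leaves this ``easy proof to the reader,'' and your argument is precisely the intended one.
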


Note that the assertion may become false if we replace {\bf S}$_1$ by
{\bf S}$_3\,$. Indeed, the intersection of a nest in $\cB_1$ may
properly contain a largest ball which does not remain the largest ball
contained in the intersection in $\cB_1\cup\cB_2\,$.

\pars
Having obtained $\cB=\cB_1\cup\cB_2\,$, the next question is how to create new balls from the balls in $\cB$
without losing spherical completeness. The results of taking unions and intersections are discussed in
\cite{[KK3]}. In the next section, we present a particular case.

\mn
%
%
\subsection{Closure under finite unions of balls}       \label{sectun}
\mbox{ }\sn
Take a ball space $(X,\cB)$. By $\fun(\cB)$ we denote the set of all unions of finitely
many balls in $\cB$. In \cite{[KK3]}, the following theorem is proven:
\begin{theorem}                           \label{f-un}
If $(X,\cB)$ is an {\bf S}$_1^c$ ball space, then so is $(X,\fun(\cB))$.
\end{theorem}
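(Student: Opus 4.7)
The plan is to deduce $\mathbf{S}_1^c$ for $(X,\fun(\cB))$ from $\mathbf{S}_1^c$ for $(X,\cB)$ by a selection argument. Given a centered system $\mathcal{F}\subseteq\fun(\cB)$, I will choose, for each $F\in\mathcal{F}$, a single ball $\sigma(F)\in\cB$ with $\sigma(F)\subseteq F$, in such a way that the resulting family $\{\sigma(F):F\in\mathcal{F}\}$ is itself a centered system in $\cB$. Once that is done, the hypothesis on $(X,\cB)$ produces a point in $\bigcap_{F\in\mathcal{F}}\sigma(F)\subseteq\bigcap\mathcal{F}$, which is all that is needed.

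For each $F\in\mathcal{F}$ I fix once and for all a representation $F=\bigcup_{B\in K_F}B$ where $K_F$ is a finite subset of $\cB$. The finite case of the selection is immediate: for any finite $\mathcal{F}_0\subseteq\mathcal{F}$ the centeredness of $\mathcal{F}$ yields an $x\in\bigcap_{F\in\mathcal{F}_0}F$, and because $F=\bigcup K_F$ we may pick some $B\in K_F$ containing $x$ for every $F\in\mathcal{F}_0$; then $x$ lies in the intersection of these chosen balls.

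To pass from these local selections to a global one, I would use a compactness argument. Endow each finite set $K_F$ with the discrete topology and form the product $P=\prod_{F\in\mathcal{F}}K_F$, which is compact by Tychonoff (only the Boolean prime ideal theorem is actually needed, since all factors are finite). For each finite $\mathcal{F}_0\subseteq\mathcal{F}$ set
\[
C_{\mathcal{F}_0}\;=\;\{\sigma\in P : \bigcap_{F\in\mathcal{F}_0}\sigma(F)\ne\emptyset\}.
\]
This set depends on only finitely many coordinates and is therefore closed in $P$; by the finite case just treated it is nonempty. Since $C_{\mathcal{F}_0\cup\mathcal{F}_1}\subseteq C_{\mathcal{F}_0}\cap C_{\mathcal{F}_1}$, the family $\{C_{\mathcal{F}_0}\}_{\mathcal{F}_0}$ has the finite intersection property, so compactness yields some $\sigma^*\in\bigcap_{\mathcal{F}_0}C_{\mathcal{F}_0}$. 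By construction $\{\sigma^*(F):F\in\mathcal{F}\}$ is a centered system in $\cB$, and applying $\mathbf{S}_1^c$ for $(X,\cB)$ completes the proof.

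The main obstacle is precisely this global selection step: the finite case is trivial, but coherently stitching together all the local choices requires an appeal to compactness (equivalently, to Zorn's lemma on the poset of partial consistent selections). Everything else is bookkeeping: fixing the finite decompositions of the elements of $\fun(\cB)$ and reducing intersection questions about these unions to intersection questions about ordinary balls.
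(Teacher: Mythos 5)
Your argument is correct, but it follows a genuinely different route from the paper's. The paper enlarges the given centered system to a \emph{maximal} centered system $\cS$ in $\fun(\cB)$ via Zorn's lemma and then proves a prime-filter-style lemma: if $B_1\cup\ldots\cup B_n\in\cS$ with $B_1,\ldots,B_n\in\cB$, then some $B_i\in\cS$ (otherwise finitely many members of $\cS$ would witness that the union cannot belong to $\cS$). Hence $\cS\cap\cB$ is a centered system in $\cB$ with the same intersection as $\cS$, and {\bf S}$_1^c$ for $(X,\cB)$ finishes the proof. You instead stay with the original system $\mathcal{F}$, fix a finite decomposition $F=\bigcup K_F$ of each member, note that centeredness supplies a consistent selection on every finite subfamily, and glue these by compactness of the product $\prod_{F}K_F$ of finite discrete spaces. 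Both proofs hinge on producing a centered subselection of single balls, and both invoke a nonconstructive principle (Zorn versus Tychonoff/BPI); the paper's maximality lemma is a reusable structural fact about maximal centered systems in $\fun(\cB)$ and yields a refinement with exactly the same intersection as the enlarged system, while your version never leaves $\mathcal{F}$ and makes the dependence on the chosen decompositions explicit. One minor caveat: your parenthetical remark that the gluing is ``equivalently'' achievable by Zorn's lemma on partial consistent selections would require an additional argument that a maximal partial selection is total; the Tychonoff argument you actually carry out does not have this problem.
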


For the convenience of the reader, we repeat the proof here. We need a lemma.
\begin{lemma}                               \label{BfB}
If ${\cal S}$ is a maximal centered system of balls in $\fun(\cB)$ (that
is, no subset of $\fun(\cB)$ properly containing ${\cal S}$ is a centered
system), then there is a subset ${\cal S}_0$ of ${\cal S}$ which is a
centered system in $\cB$ and has the same intersection as ${\cal S}$.
\end{lemma}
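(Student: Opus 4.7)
The plan is to use the maximality of $\mathcal{S}$ to show that each ``packet'' $F \in \mathcal{S}$ contains (as a subset) a $\cB$-ball that itself lies in $\mathcal{S}$, and then to take $\mathcal{S}_0 := \mathcal{S}\cap \cB$. The rest will be bookkeeping.

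First I would fix, for each $F\in \mathcal{S}$, a decomposition $F = B_1^F\cup\cdots\cup B_{n_F}^F$ with $B_j^F\in \cB$, which exists since $F\in\fun(\cB)$. The central claim is then: for every $F\in \mathcal{S}$, at least one $B_j^F$ belongs to $\mathcal{S}$. Since each $B_j^F$ itself lies in $\fun(\cB)$ (a single ball is a one-fold union), maximality of $\mathcal{S}$ says that $B_j^F\in \mathcal{S}$ precisely when $\mathcal{S}\cup\{B_j^F\}$ is still centered. So I would argue by contradiction: if no $B_j^F$ lies in $\mathcal{S}$, then for each $j\in\{1,\ldots,n_F\}$ there is a finite subfamily $\mathcal{T}_j\subseteq\mathcal{S}$ witnessing incompatibility, i.e.\ $B_j^F\cap\bigcap\mathcal{T}_j=\emptyset$. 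The finite subfamily $\{F\}\cup\bigcup_j \mathcal{T}_j$ of $\mathcal{S}$ has nonempty intersection (since $\mathcal{S}$ is centered); pick a point $x$ in it. Then $x\in F$, so $x\in B_{j_0}^F$ for some $j_0$, while simultaneously $x\in\bigcap\mathcal{T}_{j_0}$, contradicting $B_{j_0}^F\cap\bigcap\mathcal{T}_{j_0}=\emptyset$.

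With the claim in hand, I would set $\mathcal{S}_0 := \mathcal{S}\cap \cB$. This is a subset of $\mathcal{S}$ and hence automatically a centered system, now living inside $\cB$. It remains to check $\bigcap\mathcal{S}_0=\bigcap\mathcal{S}$. The inclusion $\bigcap\mathcal{S}\subseteq\bigcap\mathcal{S}_0$ is trivial from $\mathcal{S}_0\subseteq\mathcal{S}$. For the reverse, take any $x\in\bigcap\mathcal{S}_0$ and any $F\in\mathcal{S}$: the claim provides some index $j$ with $B_j^F\in\mathcal{S}\cap\cB=\mathcal{S}_0$, so $x\in B_j^F\subseteq F$; thus $x\in\bigcap\mathcal{S}$.

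The main obstacle — really the only nontrivial content — is the maximality argument producing a $B_j^F$ in $\mathcal{S}$. The subtlety there is that one cannot pick a single witness $\mathcal{T}$ to kill all the $B_j^F$ at once; one has to gather the witnesses $\mathcal{T}_1,\ldots,\mathcal{T}_{n_F}$ separately and exploit the fact that finitely many of them can still be assembled into one finite subfamily of $\mathcal{S}$, whose guaranteed nonempty intersection must fall into one of the $B_j^F$'s by the union decomposition of $F$. Everything else follows formally.
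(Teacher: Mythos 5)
Your proposal is correct and follows essentially the same route as the paper's proof: the key step in both is that if no $\cB$-ball from a decomposition of $F\in\cS$ lies in $\cS$, then maximality yields finite witnesses $\cT_1,\ldots,\cT_{n}\subseteq\cS$ whose union, together with $F$, is a finite subfamily of $\cS$ with empty intersection, contradicting centeredness. Your explicit bookkeeping with $\cS_0=\cS\cap\cB$ and the verification of equal intersections is exactly what the paper leaves implicit after its reduction "it suffices to prove\dots".
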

\begin{proof}
It suffices to prove the following: if $B_1,\ldots,B_n\in \cB$ such that
$B_1\cup\ldots\cup B_n\in \cS$, then there is some $i\in\{1,\ldots,n\}$
such that $B_i\in\cS$.

Suppose that $B_1,\ldots,B_n\in \cB\setminus \cS$. By the maximality
of $\cS$ this implies that for each $i\in\{1,\ldots,n\}$, $\cS\cup
\{B_i\}$ is not centered. This in turn means that there is a finite
subset $\cS_i$ of $\cS$ such that $\bigcap\cS_i\cap B_i=\emptyset$. But
then $\cS_1\cup\ldots\cup\cS_n$ is a finite subset of $\cS$ such that
\[
\bigcap(\cS_1\cup\ldots\cup\cS_n)\cap(B_1\cup\ldots\cup B_n)
\>=\>\emptyset\>.
\]
This yields that $B_1\cup\ldots\cup B_n\notin \cS$, which proves our
assertion.
\end{proof}

\mn
{\bf Proof of Theorem~\ref{f-un}:} \
Take a centered system $\cS'$ of balls in $\fun(\cB)$. Centered
systems of balls in a ball space are inductively ordered by
inclusion. Hence there is a maximal centered system $\cS$ of balls
in $\fun(\cB)$ which contains $\cS'$. By Lemma~\ref{BfB} there is a
centered system $\cS_0$ of balls in $\cB$ such that $\bigcap\cS_0=
\bigcap\cS\subseteq\bigcap\cS'$. Since $(X,\cB)$ is an S$_1^c$ ball
space, we have that $\bigcap\cS_0\ne\emptyset$, which yields that
$\bigcap\cS'\ne\emptyset$. This proves that $(X,\fun(\cB))$
is an S$_1^c$ ball space.
\qed

\pars
This theorem becomes false if ``{\bf S}$_1^c$'' is replaced by ``{\bf S}$_1$''.
\begin{example} \rm
For every $i\in\N$ we let $p_i$ be the $i$-th prime. For every $i\in\N$ we define a ball $B_i\subset\R$ by
\[
B_i\>:=\>\left(0,\frac{1}{p_i}\right)\setminus\left\{\frac{1}{p_i^j}\mid j\in\N \mbox{ with }
p_i^j>p_{i+1}\right\}\>,
\]
and set $\cB:=\{B_i\mid i\in\N\}$. Then for $i\ne j$, $B_i$ and $B_j$ are not comparable by inclusion. Therefore,
$\cB$ admits no nests with more than one ball and is thus spherically complete. But
\[
B_i\cup B_{i+1}\>=\> \left(0,\frac{1}{p_i}\right)
\]
since all the real numbers in $(0,\frac{1}{p_i})$ that are missing in $B_i$ are elements of $B_{i+1}\,$. Further,
$(B_i\cup B_{i+1})_{i\in\N}$ is a nest in $(\R,\fun(\cB))$. As it has empty intersection, $(\R,\fun(\cB))$ is
not spherically complete.
\end{example}

This leads us to the following question:
\sn
{\it Under which other conditions than {\bf S}$_1^c$ is spherical completeness preserved under taking finite unions?}

\sn
We discuss one special case in the next section, starting with a ball space that is not {\bf S}$_1^c$.

\mn
%
%
\subsection{Bar-bells}                \label{sectbb}
As already mentioned in the Introduction, a natural example of algebraic structures on which two distinct
ball spaces appear naturally are ordered
abelian groups and ordered fields. On the one hand such a structure $(G,<)$ admits a natural valuation which is
nontrivial if the ordering is nonarchimedean (cf.\ Section~\ref{sectnatval}). This gives rise to an ultrametric
space, from which in turn we can derive the ball space $(G,\cB_u)$ where $\cB_u$ consists of all closed
ultrametric balls (cf.\ Section~\ref{sectbultval}). On the other hand, one can consider the order
ball space $(G,\cB_o)$ where $\cB_o$ consists of all nonempty closed bounded intervals. In \cite{[KKSh]} it is
shown that if $(G,<)$ is symmetrically complete, then $(G,\cB_o)$ is spherically complete,
and that if $(G,\cB_o)$ is spherically complete, then so is $(G,\cB_u)$. Hence if $(G,<)$ is symmetrically
complete, then $(G,\cB)$ is spherically complete for $\cB=\cB_u\cup\cB_o$ by Proposition~\ref{B1cupB2}.

While $(G,\cB_u)$ is always an {\bf S}$_2^c$ ball space once it is spherically complete (cf.\ \cite{[KK3]}),
$(G,\cB_o)$ and $(G,\cB)$ are {\bf S}$_1^c$ if and only if $G=\R$ (with the canonical ordering). Hence if
$G\ne\R$, we cannot apply Theorem~\ref{f-un} here.

\parm
A \bfind{bar-bell} is a subset of $G$ obtained from a nonempty closed bounded interval $[a,b]$ by joining it with
ultrametric balls centered in $a$ and in $b$; it can thus be written as $B_\alpha(a)\cup [a,b] \cup B_\beta(b)$
with $\alpha,\beta\in uG$ and $a\leq b$.

\begin{lemma}                 \label{barbellssphc}
For every symmetrically complete ordered abelian group or field, the ball space of bar-bells is spherically complete.
\end{lemma}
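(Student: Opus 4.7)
The plan is to prove spherical completeness of the bar-bell ball space by reducing to applications of the spherical completeness of the ultrametric ball space $(G,\cB_u)$ and the order ball space $(G,\cB_o)$, both of which are guaranteed by the symmetric completeness of $G$. Given a nest $(C_i)_{i\in I}$ of bar-bells with $C_i=B_{\alpha_i}(a_i)\cup[a_i,b_i]\cup B_{\beta_i}(b_i)$, I may assume $I$ has no maximum, since otherwise the smallest bar-bell is already the intersection.

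The first key step would be to establish a structural dichotomy: whenever $C_j\subseteq C_i$, the ball $B_{\alpha_j}(a_j)$ is a convex subset of $C_i$ and so exactly one of the following holds: (i) $B_{\alpha_j}(a_j)\subseteq B_{\alpha_i}(a_i)$, or (ii) $B_{\alpha_j}(a_j)$ is disjoint from $B_{\alpha_i}(a_i)$ and lies entirely above it in $G$. This relies on the fundamental ultrametric fact that two balls are either nested or disjoint, together with the observation that a ball strictly containing $B_{\alpha_i}(a_i)$ would extend below $\inf C_i$, contradicting $B_{\alpha_j}(a_j)\subseteq C_i$. The symmetric dichotomy holds for the right balls.

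The proof then proceeds by trichotomy. In Case (A), there is a cofinal $I'\subseteq I$ along which the left balls form a nest; spherical completeness of $\cB_u$ produces $x\in\bigcap_{i\in I'}B_{\alpha_i}(a_i)\subseteq\bigcap_{i\in I}C_i$. Case (B) is the symmetric situation for right balls. In the remaining Case (C), the failure of (A) combined with the dichotomy forces, cofinally in $I$, that $a_j$ lies above $B_{\alpha_i}(a_i)$, hence in $[a_i,b_i]\cup B_{\beta_i}(b_i)$. I would argue that if $a_j\in B_{\beta_i}(b_i)$ were to happen cofinally, then cofinally $C_j\subseteq B_{\beta_i}(b_i)$, yielding a nested cofinal subsystem of right balls and placing us in Case (B), a contradiction. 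Thus $a_j\in[a_i,b_i]$ cofinally, giving $a_i\leq a_j$; symmetrically $b_j\leq b_i$. After passing to a common cofinal subset, the middle intervals $[a_i,b_i]$ form a nest of closed bounded intervals, and spherical completeness of $\cB_o$ delivers a point in $\bigcap[a_i,b_i]\subseteq\bigcap C_i$.

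The main obstacle will be making Case (C) fully rigorous. One needs to combine the failures of (A) and (B) carefully via the dichotomy, select mutually compatible cofinal subsets of $I$ for the left and right sides, and exploit the flexibility inherent in a bar-bell's representation, namely that each center $a_i$ (resp.\ $b_i$) can be replaced by any other point of $B_{\alpha_i}(a_i)$ (resp.\ $B_{\beta_i}(b_i)$) subject only to $a_i\leq b_i$. This bookkeeping, rather than any deep new idea, is the technically delicate heart of the argument.
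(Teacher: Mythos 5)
Your proposal is correct and follows essentially the same strategy as the paper's proof: using the nested-or-disjoint property of ultrametric balls together with the fact that the outer balls are initial/final segments of a bar-bell, either some cofinal family of outer balls forms a nest (handled by spherical completeness of $\cB_u$), or cofinally the later bar-bells escape both outer balls of the earlier ones, in which case one falls back on the spherical completeness of the order balls. The bookkeeping you flag in Case (C) is resolved in the paper by choosing, cofinally, indices $i_{\nu+1}>i_\nu$ at which \emph{both} outer balls become disjoint from those of $B_{i_\nu}$ simultaneously, so that the entire bar-bell $B_{i_{\nu+1}}$ (not merely its middle interval) is squeezed into $[a_{i_\nu},b_{i_\nu}]$; this makes the limit stages of the transfinite construction and the identity $\bigcap_\nu B_{i_\nu}=\bigcap_\nu [a_{i_\nu},b_{i_\nu}]$ automatic.
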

\begin{proof}
Take a nest $\cN$ of bar-bells. W.l.o.g.\ we may assume that $\cN=(B_i)_{i<\kappa}$ where $\kappa$ is the
cofinality of the nest and that $i<j<\kappa\Rightarrow B_j\subsetneq B_i\,$. We write
\[
B_i\>=\> B_{\alpha_i}(a_i)\cup [a_i,b_i] \cup B_{\beta_i}(b_i)
\]
with $\alpha_i,\beta_i\in uG$ and $a_i\leq b_i\,$.

If there is a nest of ultrametric balls that has an intersection which is a
subset of the intersection of $\cN$, then we are done. Hence assume that there is no such nest. We will construct a
sequence $(i_\nu)_{\nu<\kappa}$ that is cofinal in $\kappa$, such that
\begin{equation}                              \label{BB'}
B_{i_{\nu+1}}\>\subsetneq\> [a_{i_{\nu}},b_{i_{\nu}}] \>\subseteq\> B_{i_\nu}\>.
\end{equation}
Then
\[
\bigcap_{i<\kappa} B_i\>=\> \bigcap_{\nu<\kappa} B_{i_\nu}\>=\> \bigcap_{\nu<\kappa} [a_{i_{\nu}},b_{i_{\nu}}]
\]
and we are done again.

We take $i_0=0$.  Assume that for some $\nu<\kappa$ we have already chosen $i_\mu$ for all $\mu\leq\nu$ such
that (\ref{BB'}) holds with $\mu$ in place of $\nu$. By our
assumption there must be some $i_{\nu+1}<\kappa$, $i_{\nu+1}>i_\nu$, such that $B_{\alpha_{i_{\nu+1}}}(a_{i_{\nu+1}})
\not\subset B_{\alpha_{i_\nu}}(a_{i_\nu})$ and $B_{\beta_{i_{\nu+1}}}(b_{i_{\nu+1}})\not\subset
B_{\beta_{i_\nu}}(b_{i_\nu})$. Then $B_{\alpha_{i_{\nu+1}}}(a_{i_{\nu+1}})\cap B_{\alpha_{i_\nu}}(a_{i_\nu})=
\emptyset$ and $B_{\beta_{i_{\nu+1}}}(b_{i_{\nu+1}})\cap B_{\beta_{i_\nu}}(b_{i_\nu})=\emptyset$. Since
$B_{i_{\nu+1}}\subset B_{i_\nu}\,$, we must have that $a_{i_\nu}<B_{i_{\nu+1}}<b_{i_\nu}$.
Therefore, (\ref{BB'}) holds.

Now assume that $\lambda<\kappa$ is a limit ordinal and that we have already chosen $i_\nu$ and
constructed $B'_{i_\nu}$ for all $\nu<\lambda$ such that (\ref{BB'}) holds. Then we choose any $i\in I$ that is
larger than all $i_\nu$ (which exists since $\lambda$ is smaller than the cofinality $\kappa$) and set
$i_\lambda:=i$ and proceed as above with $\nu=\lambda$.
\end{proof}
\pars
Any finite union $S$ of closed bounded intervals and ultrametric balls that is convex is a bar-bell.
This is seen as follows. Suppose that the union of the intervals $[a_i,b_i]$, $1\leq i\leq m$, and the balls
$B_{\alpha_j}(c_j)$, $1\leq j\leq n$, is convex. Since ultrametric balls with equal centers are comparable by
inclusion, by listing only the largest ones we may assume that all $c_j$ are distinct. Further, we can add the
singleton ball $B_\infty(a_i)$ or $B_\infty(b_i)$ to the balls $B_{\alpha_j}(c_j)$ in case $a_i\,$, or $b_i$
respectively, is not contained in any of the balls. Let $\min_{1\leq j\leq n} c_j=c_{j_1}$ and
$\max_{1\leq j\leq n} c_j=c_{j_2}\,$. Then $S=B_{\alpha_{j_1}}(c_{j_1})\cup [c_{j_1},c_{j_2}]\cup
B_{\alpha_{j_2}}(c_{j_2})$.

\pars
From this fact together with Lemma~\ref{barbellssphc} we obtain Theorem~\ref{thmbb}.


\parm
Let us collect a few special properties of the ball space $\cB$ of convex finite unions of closed bounded
intervals and ultrametric balls, which could be helpful in answering the question stated after
Theorem~\ref{thmbb}.
\sn
1) All of its balls can be expressed by a union of at most 3 balls from the two generating ball spaces $\cB_u$ and
$\cB_o\,$.
\sn
2) Every ball in $\cB_u\cap\cB_o$ is a singleton.
\sn
3) Both $\cB_u$ and $\cB_o$ are closed under finite nonempty intersections.
\sn
4) If $\cN$ is a nest in $\cB$l, then there is a nest $\cN'$ in $\cB_u$ or in $\cB_o$ such that
$\bigcap\cN'\subseteq \bigcap\cN$.

\parm
An attempt to generalize the notion of convexity to arbitrary ball spaces could be to call a finite collection of
balls \bfind{pseudo convex} if it is of the form $\{B_1,\ldots,B_n\}$ with $B_i\cap B_{i+1}\ne \emptyset$ for $1\leq
i<n$.

\sn
{\bf Open question:} \ {\it Does the closure of a ball space under unions of finite pseudo convex collections of
balls always preserve spherical completeness, even if the ball space fails to be {\bf S}$_1^c$? If not, what other
conditions could ensure this?}

\bn
%
%
\section{Ball continuity and quotient ball spaces}  \label{sectcon}
Take two ball spaces $(X,\cB)$ and $(X',\cB')$ and a function
$f:X\rightarrow X'\,$. We will call $f$
%
%
\bfind{ball continuous} if the preimage of every ball in $\cB'$ is a
ball in $\cB$, that is,
\begin{equation}                 \label{Blr}
\{f^{-1}(B')\mid B'\in \cB'\}\>\subseteq\> \cB\>.
\end{equation}
We will call $f$ \bfind{ball closed} if the image of every ball in $\cB$ is a ball in $\cB'$.
\begin{lemma}                    \label{compcont}
Take three ball spaces $(X,\cB_X)$, $(Y,\cB_Y)$ and $(Z,\cB_Z)$, and functions $f:X\rightarrow Y$ and
$g:Y\rightarrow
Z$. Then $g\circ f$ is ball continuous if $f$ and $g$ are. Likewise, $g\circ f$ is ball closed if $f$ and $g$ are.
\end{lemma}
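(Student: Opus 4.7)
The plan is to imitate the classical topological argument verbatim: ball continuity and ball closedness are each defined by a pointwise condition on balls (preimage lies in the target ball set, image lies in the target ball set), so composition is handled by chaining these conditions together through the identity $(g\circ f)^{-1}=f^{-1}\circ g^{-1}$ and its forward-image counterpart $(g\circ f)(A)=g(f(A))$.

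For the ball continuous assertion, I would take an arbitrary $B\in\cB_Z$. Since $g$ is ball continuous, $g^{-1}(B)\in\cB_Y$. Applying the ball continuity of $f$ to this ball of $\cB_Y$ gives $f^{-1}(g^{-1}(B))\in\cB_X$. Finally, observe $(g\circ f)^{-1}(B)=f^{-1}(g^{-1}(B))$, so every preimage of a ball in $\cB_Z$ under $g\circ f$ lies in $\cB_X$, verifying condition (\ref{Blr}) for $g\circ f$.

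For the ball closed assertion, I would dually take an arbitrary $B\in\cB_X$. Since $f$ is ball closed, $f(B)\in\cB_Y$, and since $g$ is ball closed, $g(f(B))\in\cB_Z$. As $(g\circ f)(B)=g(f(B))$, the image of every ball in $\cB_X$ under $g\circ f$ lies in $\cB_Z$, which is the definition of $g\circ f$ being ball closed.

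There is no real obstacle here; the lemma is a direct unwinding of the definitions together with the elementary set-theoretic identities for preimages and images under composition. The only thing worth being careful about is keeping the two assertions cleanly separated, so that a reader sees the symmetry between the preimage formulation (which composes contravariantly) and the forward-image formulation (which composes covariantly).
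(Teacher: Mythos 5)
Your argument is correct and is essentially identical to the paper's own proof, which likewise rests on the identity $(g\circ f)^{-1}(B)=f^{-1}(g^{-1}(B))$ for the continuity claim and treats the closedness claim as the obvious covariant analogue. You have merely spelled out the routine steps that the paper leaves implicit.
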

\begin{proof}
For the first assertion, note that for every $B\in\cB_Z$, $(g\circ f)^{-1}(B)=f^{-1}(g^{-1}(B))$. The
second assertion is obvious.
\end{proof}

\pars
The next theorem gives conditions for functions to preserve spherical completeness in one or the other ditrection.
\begin{theorem}                         \label{sphctransf}
Take two ball spaces $(X,\cB)$ and $(X',\cB')$, and a function $f:X\rightarrow X'\,$.
\sn
a) If $f$ is ball continuous and $(X,\cB)$ is spherically complete, then so is $(X',\cB')$.
\sn
b) Assume that $f$ is surjective and
\begin{equation}                 \label{Brl}
\cB \>\subseteq\> \{f^{-1}(B')\mid B'\in \cB'\}\>.
\end{equation}
If $(X',\cB')$ is spherically complete, then so is $(X,\cB)$.

\sn
c) Assume that $f$ is surjective or $(X',\cB')$ is an {\bf S}$_2$ ball space, and that
\begin{equation}                 \label{B=}
\cB \>=\> \{f^{-1}(B')\mid B'\in \cB'\}\>.
\end{equation}
Then $(X,\cB)$ is spherically complete if and only if $(X',\cB')$ is.

\sn
d) If (\ref{B=}) holds and $f$ is surjective, then
\begin{equation}                 \label{B'=}
\cB' \>=\> \{f(B)\mid B\in \cB\}
\end{equation}
and $f$ induces an isomorphism of the partially ordered sets $\cB$ and $\cB'$.

\sn
e) $f:X\rightarrow X'$ is ball closed and finite-to-one, and if $(X',\cB')$ is spherically complete, then so is
$(X,\cB)$.
%
\end{theorem}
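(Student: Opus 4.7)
The plan is to convert a nest in $(X,\cB)$ into a nest in $(X',\cB')$ via the image map, apply spherical completeness downstairs to pin down a point $y\in X'$, and then use the finiteness of the fiber $f^{-1}(y)$ to extract an actual common point upstairs.

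First I would take an arbitrary nest $\cN$ in $\cB$ and form $\cN':=\{f(B)\mid B\in\cN\}$. Since $f$ is ball closed, each $f(B)$ lies in $\cB'$; since forward images preserve inclusions and $f(B)\neq\emptyset$ for $B\neq\emptyset$, the family $\cN'$ is a totally ordered (by inclusion) collection of nonempty sets in $\cB'$, hence a nest. By spherical completeness of $(X',\cB')$, there is some $y\in\bigcap_{B\in\cN}f(B)$, which translates into $f^{-1}(y)\cap B\neq\emptyset$ for every $B\in\cN$.

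Next I would exploit the finite-to-one hypothesis. The family $\{f^{-1}(y)\cap B\mid B\in\cN\}$ is a totally ordered (by inclusion) collection of nonempty subsets of the \emph{finite} set $f^{-1}(y)$. Any such collection has only finitely many distinct members and therefore contains an inclusion-minimum element, which coincides with the intersection of the whole family and is nonempty by construction. Picking any $x$ in this minimum yields $x\in B$ for every $B\in\cN$, so $x\in\bigcap\cN$, as required.

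The only point that requires any thought — really the whole content of the argument — is the descent step: with no continuity-type hypothesis available to pull balls back from $X'$ to $X$, the finite-to-one condition is exactly what substitutes for it, via the elementary observation that a nest of nonempty subsets of a finite set has nonempty intersection. Everything else is routine bookkeeping about images and nests.
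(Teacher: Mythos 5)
Your argument establishes only part e) of the theorem; parts a)--d) are not addressed at all, and that is the main gap. The statement also asserts, among other things, that ball continuity pushes spherical completeness forward (a), that surjectivity together with condition (\ref{Brl}) pulls it back (b), the equivalence in (c) under (\ref{B=}) --- including the non-surjective case, where one needs the {\bf S}$_2$ hypothesis to guarantee that a point of $\bigcap\cN'$ actually has a preimage --- and the order isomorphism of d). None of these follow from, or are touched by, the image-and-finite-fiber argument you give, so as a proof of the theorem as stated the proposal is incomplete.

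For part e) itself, your argument is correct and is essentially the same as the paper's: push the nest forward (ball closedness plus preservation of inclusions by images makes $\cN'$ a nest), use spherical completeness of $(X',\cB')$ to get $y\in\bigcap\cN'$, and then use finiteness of $f^{-1}(y)$ to find a single preimage lying in every ball of $\cN$. Your justification of that last step --- that $\{f^{-1}(y)\cap B\mid B\in\cN\}$ is a chain of nonempty subsets of a finite set, hence has a nonempty minimum equal to its intersection --- is in fact spelled out more carefully than in the paper, which simply asserts that among the finitely many preimages of $y$ at least one must lie in all $B_i$. So for e) you have a complete and slightly more explicit version of the published argument; you now need to supply proofs of the remaining four assertions.
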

\begin{proof}
a): \ Take a nest $\cN'=(B'_i)_{i\in I}$ of balls in $(X',\cB')$. Set $\cN=(f^{-1}(B'_i))_{i\in I}\,$. By assumption,
we have that $f^{-1}(B'_i)\in\cB$ for all $i\in I$, For any $i,j
\in I$ we have that $B'_i\subseteq B'_j$ or $B'_j\subseteq B'_i\,$, hence also $f^{-1}(B'_i)\subseteq f^{-1}(B'_j)$
or $f^{-1}(B'_j)\subseteq f^{-1}(B'_i)\,$. This proves that $\cN$ is a nest in $\cB$. As $(X,\cB)$ is spherically
complete,
$\bigcap\cN$ is nonempty. Since $f(\bigcap\cN)\subseteq B'_i$ for all $i\in I$, it follows that $f(\bigcap\cN)
\subseteq\bigcap\cN'$, which shows that the latter is nonempty.

\pars
b): \ Take a nest $\cN=(B_i)_{i\in I}$ of balls in $(X,\cB)$. Set $\cN'=(f(B_i))_{i\in I}\,$. By assumption,
we have that every $B_i$ is the preimage of a ball $B'_i$ in $\cB'$, hence by surjectivity of $f$,
$f(B_i)=B'_i\in\cB'$ for all $i\in I$.
For any $i,j\in I$ we have that $B_i\subseteq B_j$ or $B_j\subseteq B_i\,$, hence also $f(B_i)\subseteq f(B_j)$ or
$f(B_j)\subseteq f(B_i)\,$. This proves that $\cN'$ is a nest in $\cB'$. As $(X',\cB')$ is spherically complete,
$\bigcap\cN'$
is nonempty. Take $x'\in\bigcap\cN'$. Then $x'\in B'_i$ for all $i\in I$. Pick some preimage $x\in X$ of $x'$ under
$f$; this is possible since $f$ is assumed surjective. Then $x\in f^{-1}(B'_i)=B_i$ for all $i\in I$. Hence $x\in
\bigcap\cN$, showing that this intersection is nonempty.

\pars
c): \ If $f$ is surjective, everything follows from assertions a) and b). If $f$ is not surjective, but $(X',\cB')$
is an {\bf S}$_2$ ball space, then we have to modify the previous proof in order to show that some $x'\in\bigcap
\cN'$ has a preimage in $X$. Since $(X',\cB')$ is {\bf S}$_2\,$, $\bigcap\cN'$ contains a ball $B'$. By (\ref{B=}),
$f^{-1}(B')$ is a ball in $\cB$ and hence nonempty. So there is an $x'\in\bigcap\cN'$ which has a preimage in $X$.

\pars
d): \ A surjective function $f:X\rightarrow X'$ induces an inclusion preserving bijection between all subsets of
$X'$ and their preimages. Condition (\ref{B=}) implies (\ref{B'=}) and that the restriction of $f$ to $\cB'$ is a
bijection between $\cB'$ and $\cB$.

\pars
e): \ Take a nest $\cN=(B_i)_{i\in I}$ of balls in $(X,\cB)$. Set $\cN'=(f(B_i))_{i\in I}\,$. By assumption, each
$f(B_i)$ is a ball in $\cB'$, and since $f$ preserves inclusion between balls, $\cN'$ is a nest. As $(X',\cB')$ is
spherically complete, $\bigcap\cN'$ is nonempty. Take $x'\in\bigcap\cN'$. Then $x'\in f(B_i)$ for all $i\in I$.
Among the finitely many preimages of $x'$ under $f$ there must be at least one that is contained in all $B_i\,$.
This element then lies in $\bigcap\cN$, showing that the intersection is nonempty.
\end{proof}

\parm
Take any function $f:X\rightarrow X'$. We define functions
\[
\varphi_f:\cP(X)\rightarrow \cP(X') \mbox{ \ and \ } \psi_f:\cP(X')\rightarrow \cP(X)
\]
by setting $\varphi_f(S):=f(S)$ for each set $S\subseteq X$ and $\psi_f(S')
:=f^{-1}(S')$ for each set $S'\subseteq X'$. If $\cS\subseteq\cP(X)$ and $\cS'\subseteq\cP(X')$, then in accordance
with our general use for functions, $\varphi_f(\cS)=\{\varphi_f(S)\mid S\in\cS\}$ and $\psi_f(\cS')=
\{\psi_f(S')\mid S'\in\cS'\}$. If $(X,\cB)$ is a ball space, then $\cB$ is just a nonempty subset of $\cP(X)
\setminus\{\emptyset\}$. We observe that then $\varphi_f(\cB)$ is a nonempty subset of $\cP(X')\setminus
\{\emptyset\}$. This shows that $\varphi_f$ sends ball spaces on $X$ to ball spaces on $X'$. Similarly, if $f$
is surjective, then $\psi_f$ sends ball spaces on $X'$ to ball spaces on $X$.

Every nest of balls in $\cB$ is also a ball space on $X$, but it has the special property of being totally ordered
by inclusion. So we are interested in the question when this property is preserved by $\varphi_f$ and $\psi_f\,$.
The following is a corollary to the previous proof.

\begin{corollary}
Take two ball spaces $(X,\cB)$ and $(X',\cB')$ and a function $f:X\rightarrow X'$.
\sn
a) If $f$ is ball continuous and $\cN'$ is a nest of balls in $\cB'$, then $\psi_f(\cN')$ is a nest of balls
in $\cB$.
\sn
b) If $f$ is ball closed and $\cN$ is a nest of balls in $\cB$, then $\varphi_f(\cN)$ is a nest of balls
in $\cB'$.
\end{corollary}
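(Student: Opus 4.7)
The corollary is essentially a direct extraction of the ``nest-preservation'' part of the proofs of Theorem~\ref{sphctransf}(a) and (e); the spherical completeness argument there used these facts as a step. My plan is to isolate exactly the two observations that do the work: (i) preimage under $f$ and image under $f$ both preserve set inclusion, and (ii) the definitions of ball continuous / ball closed guarantee that the resulting families consist of balls (and in particular are nonempty, as required by the definition of a ball space).

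For part (a), I would take $\cN' = (B'_i)_{i \in I}$ a nest in $\cB'$ and form $\psi_f(\cN') = \{f^{-1}(B'_i) \mid i \in I\}$. By ball continuity (inclusion~(\ref{Blr})), each $f^{-1}(B'_i)$ lies in $\cB$, and in particular is nonempty. For any $i,j \in I$, totality of the nest gives $B'_i \subseteq B'_j$ or $B'_j \subseteq B'_i$; applying $f^{-1}$ preserves inclusions, so $\psi_f(\cN')$ is totally ordered by inclusion and hence a nest in $\cB$.

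For part (b), the argument is symmetric. Given $\cN = (B_i)_{i\in I}$ a nest in $\cB$, set $\varphi_f(\cN) = \{f(B_i) \mid i\in I\}$. Ball closedness of $f$ gives $f(B_i) \in \cB'$ for every $i$, and $f(B_i)$ is nonempty because $B_i$ is. Since $f$ is monotone with respect to inclusion ($B_i \subseteq B_j \Rightarrow f(B_i) \subseteq f(B_j)$), totality of inclusion transfers from $\cN$ to $\varphi_f(\cN)$, which is therefore a nest in $\cB'$.

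There is no genuine obstacle here; the only thing worth being careful about is confirming that $\psi_f$ and $\varphi_f$ land in $\cB$ and $\cB'$ respectively with nonempty members, and the required definitions of ball continuous and ball closed are precisely arranged to give this. I would keep the proof short and simply cite the monotonicity of image and preimage under inclusion.
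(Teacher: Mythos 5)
Your proof is correct and is essentially the paper's own argument: the paper simply remarks that the corollary follows from the proof of Theorem~\ref{sphctransf}, whose parts a) and e) contain precisely the steps you isolate (preimage/image preserve inclusion, and ball continuity/closedness place the resulting sets in $\cB$ respectively $\cB'$). Nothing further is needed.
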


Take two ball spaces $(X,\cB)$ and $(X',\cB')$. If there is a surjective function $f:X\rightarrow X'$ such that
(\ref{B=}) holds, then we call $(X',\cB')$ a  \bfind{quotient ball space} of $(X,\cB)$ (induced by the function
$f$). Note that $\cB$ is the coarsest of all ball spaces $\cS$ on $X$ such that $f$ is a ball continuous function
from $(X,\cS)$ to $(X',\cB')$, and that $\cB'$ is the finest of all ball spaces $\cS'$ on $X'$ such that $f$ is a
ball continuous function from $(X,\cB)$ to $(X',\cS')$.


%
%
\section{Products and coproducts}               \label{sectprodco}
The \bfind{category of ball spaces} consists of all ball spaces as objects and the ball continuous functions between
them as morphisms. In this section we show that products and coproducts exist in this category, and
we will describe them explicitly.
\begin{theorem}
The category of ball spaces admits products and coproducts.
\end{theorem}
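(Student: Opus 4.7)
The plan is to realize both the product and the coproduct on the set-theoretic (co)product of the underlying carriers, and to equip each with the coarsest (resp.\ finest) collection of subsets that makes the canonical maps ball continuous; this makes ball continuity of the universal mediating morphism automatic. Fix a nonempty family $(X_i,\cB_i)_{i\in I}$ of ball spaces.

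For the product, I would take $X:=\prod_{i\in I}X_i$ with the projections $\pi_i:X\to X_i$ and set
\[
\cB\>:=\>\{\pi_i^{-1}(B)\mid i\in I,\,B\in\cB_i\}\>.
\]
Since every $X_j$ is nonempty, $X$ is nonempty and each $\pi_i^{-1}(B)=B\times\prod_{j\ne i}X_j$ is nonempty, so $(X,\cB)$ is a ball space, and each $\pi_i$ is ball continuous by construction. For the universal property, given any family of ball continuous maps $g_i:(Y,\cB_Y)\to(X_i,\cB_i)$, the unique set-theoretic map $g:Y\to X$ satisfying $\pi_i\circ g=g_i$ pulls back a typical ball as
\[
g^{-1}(\pi_i^{-1}(B))\>=\>(\pi_i\circ g)^{-1}(B)\>=\>g_i^{-1}(B)\>\in\>\cB_Y\>,
\]
so $g$ is ball continuous.

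For the coproduct, the naive guess of taking the disjoint union $\bigsqcup_iX_i$ with ball set $\bigcup_i\cB_i$ \emph{fails}: a ball $B\in\cB_j$, viewed as a subset of $\bigsqcup_iX_i$, has empty preimage under the inclusions $\iota_i$ with $i\ne j$, so such inclusions cannot be ball continuous (balls must be nonempty). The remedy is to use \emph{transversal} balls: set $X:=\bigsqcup_{i\in I}X_i$ and
\[
\cB\>:=\>\Bigl\{\bigsqcup_{i\in I}B_i\;\Big|\;B_i\in\cB_i\text{ for every }i\in I\Bigr\}\>.
\]
Each such set is nonempty, and $\iota_j^{-1}(\bigsqcup_iB_i)=B_j\in\cB_j$, so every inclusion is ball continuous. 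Given ball continuous $g_i:(X_i,\cB_i)\to(Y,\cB_Y)$, the induced map $g:X\to Y$ satisfies
\[
g^{-1}(B)\>=\>\bigsqcup_{i\in I}g_i^{-1}(B)\>\in\>\cB
\]
for every $B\in\cB_Y\,$, because each $g_i^{-1}(B)\in\cB_i$ is nonempty; hence $g$ is ball continuous. Uniqueness of the mediating morphism in both cases is forced set-theoretically by the commuting triangles.

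The one genuine obstacle is the coproduct: the nonemptiness convention on balls rules out the obvious ``side-by-side'' ball collection, and the transversal construction above (which uses the axiom of choice to pick a $B_i$ from each $\cB_i$) is essentially the only choice compatible both with ball continuity of the inclusions and with ball continuity of arbitrary universal maps. The product side proceeds entirely formally.
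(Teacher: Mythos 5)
Your constructions are exactly those of the paper: the product carries the balls $\pi_i^{-1}(B)=B\times\prod_{j\ne i}X_j$ (the paper writes these as products $\prod_i B_i$ with $B_i=Y_i$ for all but one index), the coproduct carries the ``transversal'' balls $\dot\bigcup_i B_i$ with $B_i\in\cB_i$ for every $i$, and the verification of the universal properties is the same computation. The proposal is correct and essentially identical to the paper's proof; your additional remark on why the naive coproduct fails is a nice touch but changes nothing substantive.
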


For the proof of the theorem, we will explicitly construct these objects. Take ball spaces $(Y_i,\cB_i)$, $i\in I$.
\sn
{\bf 1) Products.} \ We set $X=\prod_{i\in I} Y_i$ and denote by $p_i:X\rightarrow Y_i$
the projection from $X$ to $Y_i\,$. We set
\begin{equation}                         \label{Bprod}
\cB\>:=\> \left\{\prod_{i\in I} B_i\subseteq X\mid \mbox{ for some }k\in I,\> B_k\in\cB_k\mbox{ and }
\forall i\ne k: B_i=Y_i \right\}\>.
\end{equation}
Since the sets $\cB_i$ are nonempty, it follows that $\cB\ne\emptyset$, and as no ball in any $\cB_i$ is empty, it follows that no ball in $\cB$ is empty.

The definition of $\cB$ yields that
all projections are ball continuous. We have to show that for any ball space $(Z,\cB_Z)$ and ball continuous
functions $f_i:Z\rightarrow Y_i$ there is a unique ball continuous function $g:Z\rightarrow X$ such that
$p_i\circ g=f_i$ for all $i\in I$. The latter forces $g(z)=(f_i(z))_{i\in I}$ for all $z\in Z$; this ensures
uniqueness. It remains to show that $g$ is ball continuous. A ball $B\in\cB$ is of the form $\prod_{j\in I} S_j$
as in (\ref{Bprod}). Then $g^{-1}(B)=f_k^{-1}(B_k)$, which is a ball in $\cB_Z$ since $f_k$ is ball continuous.
This shows that $g$ is ball continuous.

\mn
{\bf 2) Coproducts.} \
Take ball spaces $(Y_i,\cB_i)$, let $X$ be the disjoint union $\dot\bigcup_{i\in I} Y_i$ and denote by
$\iota_i:Y_i\rightarrow X$ the canonical embedding of $Y_i$ in $X$. We set
\begin{equation}                      \label{Bcoprod}
\cB\>:=\> \left\{\bigcup_{i\in I} \iota_i(B_i) \mid \forall i\in I: B_i\in\cB_i\right\}\>.
\end{equation}
For the same reasons as before, $\cB\ne\emptyset$ and no ball in $\cB$ is empty.

For all $j\in I$ we have that $\iota_j^{-1}(\bigcup_{i\in I} \iota_i(B_i))=B_j\,$, so each $\iota_j$ is ball
continuous. We have to show that for any ball space $(Z,\cB_Z)$ and ball continuous
functions $f_i:Y_i\rightarrow Z$ there is a unique ball continuous function $g:X\rightarrow Z$ such that
$g\circ\iota_i=f_i$ for all $i\in I$. The latter forces $g(x)=f_i(y)$ when $y\in Y_i$ with $x=\iota_i(y)$; this
ensures uniqueness. It remains to show that $g$ is ball continuous. Take a ball $B\in\cB_Z$. Then $g^{-1}(B)=
\bigcup_{i\in I} \iota_i(f_i^{-1}(B))$. This is a ball in $\cB$ because $f_i^{-1}(B)\in\cB_i$ for all $i\in I$
as all $f_i$ are ball continuous.

\mn
{\bf Notation:} We will denote the product defined above by $\prod_{i\in I} (Y_i,\cB_i)$, and the coproduct by
$\coprod_{i\in I} (Y_i,\cB_i)$. Further, we may rewrite $\bigcup_{i\in I} \iota_i(B_i)$ as $\dot\bigcup_{i\in I}
B_i\,$.

\begin{theorem}                                   \label{bprtpr}
Take ball spaces $(Y_i,\cB_i)$, $i\in I$. Then the following assertions hold:
\sn
a) \ $\prod_{i\in I} (Y_i,\cB_i)$ is spherically complete if and only if all ball spaces $(Y_i,\cB_i)$, $i\in I$,
are spherically complete.
\sn
b) \ If at least one of the ball spaces
$(Y_i,\cB_i)$, $i\in I$, is spherically complete, then $\coprod_{i\in I} (Y_i,\cB_i)$ is spherically complete.
\sn
c) \ The following are equivalent:
\sn
i) all ball spaces $(Y_i,\cB_i)$, $i\in I$, are {\bf S}$_2\,$,
\sn
ii) $\prod_{i\in I} (Y_i,\cB_i)$ is {\bf S}$_2\,$,
\sn
ii) $\coprod_{i\in I} (Y_i,\cB_i)$ is {\bf S}$_2\,$.
\sn
The same holds with {\bf S}$_3$ and with {\bf S}$_4$ in place of {\bf S}$_2\,$.
\end{theorem}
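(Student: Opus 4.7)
The plan is to reduce every assertion to a coordinate-wise analysis of balls and nests in the product and the coproduct; once the structural descriptions are in place, (a), (b) and (c) become essentially book-keeping.

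First I would describe the nests in the product $\prod_{i\in I}(Y_i,\cB_i)$. Every ball has the form $B_k\times\prod_{i\ne k}Y_i$ for some index $k\in I$ and some $B_k\in\cB_k$. The key observation is that if two balls are concentrated on distinct indices $k_1\ne k_2$ and one is contained in the other, say $B^{(1)}\subseteq B^{(2)}$, then projecting to the $k_2$-coordinate gives $Y_{k_2}\subseteq B^{(2)}_{k_2}$, which forces $B^{(2)}=X=\prod_iY_i$. Consequently, for any nest $\cN$ in the product, the subcollection $\cN':=\cN\setminus\{X\}$ is either empty or entirely concentrated on a single index $k$ and corresponds to a nest $\cN_k$ in $\cB_k$ consisting of proper balls $B_k\subsetneq Y_k$; in either case $\bigcap\cN=(\bigcap\cN_k)\times\prod_{i\ne k}Y_i$, with the convention that an empty intersection over $\cN_k$ equals $Y_k$ and so $\bigcap\cN=X$. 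Moreover, whenever $\cN_k$ is nonempty we have $\bigcap\cN_k\subsetneq Y_k$, so any ball of $\cB$ lying in $\bigcap\cN$ is automatically of the form $C\times\prod_{i\ne k}Y_i$ with $C\in\cB_k$ and $C\subseteq\bigcap\cN_k$.

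Next I would describe the nests in the coproduct $\coprod_{i\in I}(Y_i,\cB_i)$. Every ball has the form $\dot\bigcup_{i\in I}B_i$ with $B_i\in\cB_i$ for every $i$, and $\dot\bigcup_iB_i\subseteq\dot\bigcup_iB'_i$ if and only if $B_i\subseteq B'_i$ for every $i$. Hence any nest $\cN=(\dot\bigcup_iB_i^j)_{j\in J}$ induces nests $\cN_i=\{B_i^j\mid j\in J\}$ in each $\cB_i$, and $\bigcap\cN=\dot\bigcup_{i\in I}\bigcap\cN_i$; any ball of the coproduct contained in $\bigcap\cN$ is of the form $\dot\bigcup_iC_i$ with $C_i\in\cB_i$ and $C_i\subseteq\bigcap\cN_i$ for every $i$.

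With these two descriptions, (a) and (b) are immediate. For (a), nonemptiness of $\bigcap\cN$ in the product is equivalent to nonemptiness of $\bigcap\cN_k$ in the associated factor, and every nest in a fixed $\cB_k$ pulls back to a nest in the product concentrated on $k$; this gives both directions. For (b), as soon as some $(Y_k,\cB_k)$ is spherically complete, $\bigcap\cN_k\ne\emptyset$ and so $\bigcap\cN=\dot\bigcup_i\bigcap\cN_i$ is nonempty. Part (c) reduces, for each of S$_2$, S$_3$, S$_4$, coordinate-by-coordinate: in the product, ``$\bigcap\cN$ contains / contains a largest / equals a ball of $\cB$'' is equivalent, apart from the trivial case $\cN=\{X\}$, to the analogous statement for $\bigcap\cN_k$ in $\cB_k$; in the coproduct it is equivalent to the analogous statement holding simultaneously in every factor. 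The converse directions in (c) again use pullback, and for the coproduct one fixes auxiliary balls $B_i^{\,0}\in\cB_i$ in the coordinates $i\ne k$ to produce a nest in the coproduct whose intersection has $\bigcap\cM$ as its $k$-component.

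The main technical obstacle is the product structural lemma together with its degenerate cases, particularly when a nest reduces to $\{X\}$ or when $\bigcap\cN_k=Y_k$: in such situations one has to verify that the intersection is automatically a ball, exploiting that $X\in\cB$ whenever $Y_k\in\cB_k$ for some $k$. The rest is routine.
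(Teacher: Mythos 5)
Your proposal is correct and follows essentially the same route as the paper's proof: the observation that the non-trivial balls of a nest in the product are concentrated on a single index, the coordinatewise decomposition of nests in the coproduct, and the commutation of intersections with products and disjoint unions. If anything, you are slightly more careful than the printed proof about the degenerate cases (the ball $X$ occurring in a nest, and the need to fix auxiliary balls $B_i^{\,0}$ when pulling a nest back into the coproduct for the converse directions of part c)).
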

\begin{proof}
Let us first observe the following. If $I$ and $J$ are some index sets and $B_{i,j}$, $i\in I$, $j\in J$ are
arbitrary sets, then
\begin{equation}
\bigcap_{j\in J}^{} \prod_{i\in I} B_{i,j}\>=\> \prod_{i\in I} \bigcap_{j\in J}^{} B_{i,j}\quad\mbox{ and }\quad
\bigcap_{j\in J}^{} \dot\bigcup_{i\in I} B_{i,j}\>=\> \dot\bigcup_{i\in I} \bigcap_{j\in J}^{} B_{i,j}\>
\end{equation}
a): \ First assume that all ball spaces $(Y_i,\cB_i)$, $i\in I$, are spherically complete. Take a nest
$(\prod_{i\in I} B_{i,j})_{j\in J}$ in $\prod_{i\in I} (Y_i,\cB_i)$. Then there is some $k\in I$ such that
$B_{i,j}=Y_i$ for all $i\in I$ with $i\ne k$ and all $j\in J$, and $(B_{k,j})_{j\in J}$ is a nest of balls
in $\cB_k\,$. Since $(Y_k,\cB_k)$ is spherically complete, $\bigcap_{j\in J} B_{k,j}\ne\emptyset$. As
$\bigcap_{j\in J} B_{i,j}=Y_i$ for $i\ne k$, we find that $\bigcap_{j\in J} \prod_{i\in I} B_{i,j} = \prod_{i\in I}
\bigcap_{j\in J}^{} B_{i,j}\ne\emptyset$. This proves that $\prod_{i\in I} (Y_i,\cB_i)$ is spherically complete.

\pars
Now assume that $\prod_{i\in I} (Y_i,\cB_i)$ is spherically complete. We have already seen that
all projections are ball continuous. Hence it follows from part a) of Theorem~\ref{sphctransf} that for
every $i\in I$, $(Y_i,\cB_i)$ is spherically complete.

\sn
b): \ Assume that at least one of the ball spaces $(Y_i,\cB_i)$, $i\in I$, is spherically complete.
Take a nest $(\dot\bigcup_{i\in I} B_{i,j})_{j\in J}$ in $\coprod_{i\in I} (Y_i,\cB_i)$. Since the unions are
disjoint, $\dot\bigcup_{i\in I} B_{i,j_1}\subseteq\dot\bigcup_{i\in I} B_{i,j_2}$ implies that
$B_{i,j_1}\subseteq B_{i,j_2}$ for all $i\in I$. This shows that for each $i\in I$, $(B_{i,j})_{j\in J}$ is a
nest in $\cB_i\,$. If $(Y_k,\cB_k)$ is spherically complete, then
\[
\emptyset\>\ne\> \bigcap_{j\in J}^{} B_{k,j} \>\subseteq\> \dot\bigcup_{i\in I} \bigcap_{j\in J}^{} B_{i,j}
\>=\> \bigcap_{j\in J}^{} \dot\bigcup_{i\in I} B_{i,j}\>.
\]

c): \ We keep the notations of the proofs of a) and b). If all ball spaces $(Y_i,\cB_i)$, $i\in I$, are
{\bf S}$_2\,$, then for each $i\in I$, $\bigcap_{j\in J}^{} B_{i,j}$ contains a ball $B'_i$ and hence
$\bigcap_{j\in J}^{} \prod_{i\in I} B_{i,j}=
\prod_{i\in I} \bigcap_{j\in J}^{} B_{i,j}$ contains the ball $\prod_{i\in I} B'_i$. If all spaces are even
{\bf S}$_3\,$, then the $B'_i$ can be taken as maximal balls in $\cB_i\cup\{Y_i\}$ contained in $\bigcap_{j\in J}^{}
B_{i,j}$ and it follows that $\prod_{i\in I} B'_i$ is a maximal ball contained in $\prod_{i\in I}
\bigcap_{j\in J}^{} B_{i,j}$. If all spaces are {\bf S}$_4\,$, then for each $i\in I$, $\bigcap_{j\in J}^{} B_{i,j}$
is a ball in $\cB_i\cup\{Y_i\}$ and hence $\bigcap_{j\in J}^{} \prod_{i\in I} B_{i,j}= \prod_{i\in I} \bigcap_{j\in
J}^{} B_{i,j}$ is a ball. This proves that i) implies ii) in all three cases.

In view of the equality $\bigcap_{j\in J}^{} \dot\bigcup_{i\in I} B_{i,j}=\dot\bigcup_{i\in I} \bigcap_{j\in J}^{}
B_{i,j}$ the above arguments can be readily adapted to prove that i) implies iii) in all three cases.

\pars
Now assume that $\prod_{i\in I} (Y_i,\cB_i)$ is {\bf S}$_2\,$. Take $k\in I$ and a nest of balls
${\cal N}= (B_j)_{j\in J}$ in $(Y_k,\cB_k)$. We define the nest $\prod_{i\in I} (Y_i,\cB_i)$ as in the proof of
part a). By assumption, the intersection
$\bigcap_{j\in J}^{} \prod_{i\in I} B_{i,j}= \prod_{i\in I} \bigcap_{j\in J}^{} B_{i,j}$ contains a ball
$\prod_{i\in I} B'_i$. It follows that $B'_k$ is a ball in $\cB_k$ that is contained in
$\bigcap_{j\in J}^{} B_{k,j}$. If $\prod_{i\in I} (Y_i,\cB_i)$ is even {\bf S}$_3\,$, then $\prod_{i\in I} B'_i$
can be assumed to be a maximal ball contained in $\bigcap_{j\in J}^{} \prod_{i\in I} B_{i,j}$, which implies that
$B'_k$ is a maximal ball contained in $\bigcap_{j\in J}^{} B_{k,j}$. If $\prod_{i\in I} (Y_i,\cB_i)$ is
{\bf S}$_4\,$, then $\bigcap_{j\in J}^{} \prod_{i\in I} B_{i,j}$ is ball, which implies that $\bigcap_{j\in J}^{}
B_{k,j}$ is a ball in $\cB_k\,$. We have proved that ii) implies i) in all three cases.

Replacing ``$\prod_{i\in I}$'' by ``$\dot\bigcup_{i\in I}$'' in the proof we just gave renders a proof for the fact
that iii) implies i) in all three cases.
\end{proof}

\pars
At first glance it may be surprising that for $\coprod_{i\in I} (Y_i,\cB_i)$ to be spherically complete it
suffices that just one of the ball spaces $(Y_i,\cB_i)$ is spherically complete, while for
$\coprod_{i\in I} (Y_i,\cB_i)$ to be {\bf S}$_2\,$, {\bf S}$_3$ or {\bf S}$_4\,$, {\it all} ball spaces
$(Y_i,\cB_i)$ must have the same property. This is because for
$\bigcap_{j\in J}^{} \dot\bigcup_{i\in I} B_{i,j}$ to be nonempty it suffices that
$\bigcap_{j\in J}^{} B_{i,j}$ is nonempty for at least one $i\in I$, whereas for it to contain a ball, all
$\bigcap_{j\in J}^{} B_{i,j}$  must contain a ball.

\pars
From the previous theorem we immediately obtain the following result.
\begin{theorem}                        \label{sphcprod}
The categories of spherically complete ball spaces, of {\bf S}$_2$ ball spaces, of {\bf S}$_3$ ball spaces
and of {\bf S}$_4$ ball spaces, all of them with ball continuous functions as their morphisms, admit products
and coproducts.
\end{theorem}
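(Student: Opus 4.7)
The plan is to exhibit the products and coproducts constructed earlier in this section as products and coproducts in each of the four subcategories in question. The strategy rests on two observations: first, each of these subcategories is a \emph{full} subcategory of the category of ball spaces (the morphisms are in every case simply the ball continuous functions, with no additional condition imposed); and second, Theorem~\ref{bprtpr} already records the precise preservation results needed. Once one checks that the explicit product and coproduct constructed in Section~\ref{sectprodco} land back in the subcategory when applied to objects of the subcategory, the universal property transfers for free, since any mediating ball continuous function between objects of a full subcategory is automatically a morphism of the subcategory.

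Concretely, for the subcategory of spherically complete ball spaces, part (a) of Theorem~\ref{bprtpr} guarantees that $\prod_{i\in I}(Y_i,\cB_i)$ is spherically complete whenever every factor is, and part (b) guarantees that $\coprod_{i\in I}(Y_i,\cB_i)$ is spherically complete whenever at least one factor is --- which is certainly the case when all factors are. For the subcategories of {\bf S}$_2$, {\bf S}$_3$ and {\bf S}$_4$ ball spaces, the triple equivalence (i)$\Leftrightarrow$(ii)$\Leftrightarrow$(iii) of part (c) of Theorem~\ref{bprtpr} directly yields that the product and the coproduct have the property in question if and only if every factor does; in particular, if all factors lie in the subcategory, so do both their product and their coproduct.

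It remains to note that in every case the projections $p_i$ and the canonical embeddings $\iota_i$ are ball continuous by construction, hence morphisms in the subcategory, and that for any competing cone (respectively cocone) in the subcategory the unique mediating function $g$ supplied by the universal property in the ambient category of ball spaces is, again by the same construction, ball continuous --- hence also a morphism in the subcategory. This identifies the ambient $\prod_{i\in I}(Y_i,\cB_i)$ and $\coprod_{i\in I}(Y_i,\cB_i)$ as the product and coproduct in each of the four subcategories. Essentially all the substantive content has already been absorbed into Theorem~\ref{bprtpr}, so no genuine obstacle remains; the only point requiring attention is the routine fullness argument that transports the universal property from the ambient category down to each subcategory.
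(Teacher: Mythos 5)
Your proposal is correct and matches the paper's own argument, which derives Theorem~\ref{sphcprod} immediately from Theorem~\ref{bprtpr}; you merely spell out the (implicit) fullness argument that transports the universal property of the explicit constructions from the ambient category of ball spaces down to each subcategory. Nothing further is needed.
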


Let us conclude this section by giving an example which shows that the converse in part b) of Theorem~\ref{bprtpr}
is in general not true. (However, it can be shown to be true when all $\cB_i$ are countable.)
\begin{example}
\rm
Take $X_1=\omega$, $\cB_1$ to be the set of all final segments in $\omega$, $X_2=\omega_1$, and $\cB_2$ to be the
set of all final segments in $\omega_1\,$. Then neither $(X_1,\cB_1)$ nor $(X_2,\cB_2)$ is spherically complete as
in both cases, the intersection over all final segments is empty. Note, however, that the intersection over every
countable nest of balls in $(X_2,\cB_2)$ is nonempty, and thus the same is true in the coproduct of the two ball
spaces. On the other hand, if $\cN=(\dot\bigcup_{i\in \{1,2\}} B_{i,j})_{j\in J}$ is an uncountable nest in
$\coprod_{i\in \{1,2\}} (Y_i,\cB_i)$, then we may w.l.o.g.\ assume that $J=\omega_1$ and that $j_1<j_2$ implies
$B_{1,j_2}\subseteq B_{1,j_1}$ and $B_{2,j_2}\subseteq B_{2,j_1}$. Since $\cB_1$ is countable, the balls $B_{1,j}$
must eventually become stationary, i.e., equal to one and the same ball $B_1\in\cB_1\,$, which is then contained in
the intersection of the nest $\cN$.
\end{example}

%
%
\section{The topological category of augmented ball spaces}               \label{topcat}
If $(X,\cB)$ is a ball space and $\cA=\cB\cup\{\emptyset,X\}$, or if $X$ is an arbitrary (not necessarily nonempty)
set and $\cA=\{\emptyset,X\}$, then we
call $(X,\cA)$ an \bfind{augmented ball space}. Take two ball spaces $(X,\cB)$ and $(X',\cB')$ and a ball continuous
function $f:X\rightarrow X'\,$. Since $f^{-1}(\emptyset)=\emptyset$ and $f^{-1}(X')=X$, $f$ also satisfies
\begin{equation}                 \label{Alr}
\{f^{-1}(A')\mid A'\in \cA'\}\>\subseteq\> \cA\>,
\end{equation}
where $\cA'=\cB'\cup\{\emptyset,X'\}$. Therefore we will also call $f$ a ball continuous function from $(X,\cA)$ to
$(X',\cA')$. Note that $f$ is always ball continuous when $\cA'=\{\emptyset,X\}$.

We define the \bfind{category of augmented ball spaces} to consist of all augmented ball spaces as objects, with
the ball continuous functions between them as morphisms.

A category $\cc$ is called \bfind{topological} if
\begin{enumerate}
\item For all $X\in\oc{Set}$ and all families $(f_i,(X_i,\xi_i))_{i\in I}$, indexed by a class $I$,  of
$\cc$-objects $(X_i,\xi_i)$ and functions $f_i:X\to X_i$ there exists a unique initial $\cc$-object $(X,\xi)$ on the set $X$, i.e., an object $(X,\xi)$ s.t.
for all objects $(Y,\eta)\in \oc{\cc}$ and maps $g:Y\to X$ the following holds:
\bdm
g\in \mcc{(Y,\eta)}{(X,\xi)}{\cc } \quad\Leftrightarrow\quad \forall i\in I: f_i\circ g\in \mcc{(Y,\eta)}{(X_i,\xi_i)}{\cc}
\edm
\bdm
\begin{xy}
\xymatrix{
  (Y,\eta) \ar[r]^{g}&   (X,\xi)\ar[r]^{f_i} &  (X_i,\xi_i)\\
 }
\end{xy}
\edm
That is: arbitrary initial structures exist.

\item Fibre-smallness: For all $X\in\oc{Set}$, the class of $\cc$-objects on $X$ is a set.
\item On sets with at most one element exists exactly one $\cc$-structure.
\end{enumerate}

\mn
Proof of Theorem~\ref{abstopcat}:
\n
(1) \ The category admits initial objects.
Take augmented ball spaces $(Y_i,\cA_i)$, a set $X$, and functions $f_i:X\rightarrow Y_i\,$,
$i\in I$. We set
\begin{equation}                    \label{inobB}
\cA\>:=\> \{f_i^{-1}(A_i)\mid i\in I,\> A_i\in\cA_i\}\>.
\end{equation}
Observe that $\emptyset=f_i^{-1}(\emptyset)\in\cA$ and $X=f_i^{-1}(Y_i)\in\cA$ since $(Y_i,\cA_i)$  is an augmented
ball space; hence also $(X,\cA)$ is an augmented ball space.

The definition of $\cA$ yields that
all $f_i$ as functions from $(X,\cA)$ to $(Y_i,\cA_i)$ are ball continuous. We have to show that for any ball
space $(Z,\cA_Z)$ and function $g:Z\rightarrow X$ we have that $g: (Z,\cA_Z)\rightarrow (X,\cA)$ is ball continuous
if and only if for all $i\in I$, $f_i\circ g: (Z,\cA_Z)\rightarrow (Y_i,\cA_i)$ is ball continuous.

If $g$ is ball continuous, then by Lemma~\ref{compcont} and our remark at the start if this section, so are all
$f_i\circ g\,$. For the converse, assume that
all $f_i\circ g$ are ball continuous. Take $A\in\cA$. Then by definition, there is some $i\in I$ and some $A_i\in
\cA_i$ such that $A=f_i^{-1}(A_i)$. Since $f_i\circ g$ is ball continuous, $g^{-1}(A)=g^{-1}(f_i^{-1}(A_i))=
(f_i\circ g)^{-1}(A_i)\in \cA_Z\,$. This shows that $g$ is ball continuous.

\sn
(2) \ Every ball structure on $X$ is a subset of $\cP(X)$ and so it is an element of $\cP(\cP(X))$.
\sn
(3) \ Every set with at most one element carries a unique augmented ball space structure. Indeed, if the set is
empty, then this is $(\emptyset,\{\emptyset\})$. If $X$ is a singleton, then $(X,\{X\})$ is a ball space, so by
definition, $(X,\{\emptyset,X\})$ is an augmented ball space; this is the only augmented ball space structure on
$X$.
\qed

\pars
Note that the definition (\ref{inobB}) used in the proof also yields an initial object in the category of all ball
spaces where the morphisms are assumed to be the surjective ball continuous
functions. But if one of the $f_i$'s is not surjective, it can happen that there is a
ball $B_i\in\cA_i$ such that $B_i\cap f_i(X)=\emptyset$, so that $f_i^{-1}(B_i)=\emptyset$.

\parm
The definitions of product and coproduct can be taken over from the category of ball spaces. If we use the
construction described in (\ref{Bprod}) to derive an augmented ball space $(X,\cA)$ from augmented ball spaces
$(Y_i,\cA_i)$, then $\cA$ will now contain $\emptyset$ (we take $B_i=\emptyset\in\cA_i$ for some $i\in I$) and $X$
(we take $B_i=Y_i\in\cA_i$ for all $i\in I$).

Further, if we apply the construction described in (\ref{Bcoprod}) to derive an augmented ball space $(X,\cA)$ from
augmented ball spaces $(Y_i,\cA_i)$, then $\cA$ will again contain $\emptyset$ (we take $B_i=\emptyset\in\cA_i$ for
all $i\in I$) and $X$ (we take $A_i=Y_i\in\cA_i$ for all $i\in I$).

\parm
Observe that now also
\[
\cA'\>:=\> \left\{\iota_i(A_i) \mid  i\in I,\, A_i\in\cA_i\right\}
\]
renders all embeddings $\iota_i$ ball continuous since $\emptyset\in\cA_i\,$. However, in all cases where $\cA'$
differs from the set $\cA$ obtained from (\ref{Bcoprod}), examples can be constructed that show that this is not a
coproduct.

\parm
Since the category of augmented ball spaces is topological, it must also admit final objects
(cf.\ \cite{[Preuss]}). We can present them explicitly. Take augmented ball spaces $(Y_i,\cA_i)$, a set $X$, and
functions $f_i:Y_i \rightarrow X$, $i\in I$. We set
\begin{equation}                    \label{finobB}
\cA\>:=\> \{A\subseteq X\mid \forall i\in I: f_i^{-1}(A)\in\cA_i\}\>.
\end{equation}
Then all $f_i$ as functions from $(Y_i,\cA_i)$ to $(X,\cA)$ are ball continuous. Further, $\emptyset\in\cA$ since
$f_i^{-1}(\emptyset)=\emptyset\in\cA_i$ for all $i\in I$, and $X\in\cA$ since $f_i^{-1}(X)=Y_i\in\cA_i$ for all
$i\in I$. It remains to show that for any ball space $(Z,\cA_Z)$ and function $g:X\rightarrow Z$ we have that $g:
(X,\cA)\rightarrow (Z,\cA_Z)$ is ball continuous if and only if for all $i\in I$, $g\circ f_i: (Y_i,\cA_i)
\rightarrow (Z,\cA_Z)$ is ball continuous.

If $g$ is ball continuous, then by Lemma~\ref{compcont} and our remark at the start if this section, so are all
$g\circ f_i\,$. For the converse, assume that all $g\circ f_i$ are ball continuous and take $A\in\cA_Z\,$. Then
$f_i^{-1}(g^{-1}(A))=(g\circ f_i)^{-1}(A)\in\cA_i$
for all $i\in I$ by continuity. Hence by (\ref{finobB}), $g^{-1}(A)\in\cA$, showing that $g$ is ball continuous.

\pars
We observe that this definition does not work in the category of ball spaces. For example, consider ball spaces
$(X,\{B_1\})$ and $(X,\{B_2\})$ with $B_1\ne B_2$ and $f_1$ and $f_2$ the identity function of $X$. Then there is no
subset of $X$ with preimage $B_1$ under $f_1$ and $B_2$ under $f_2\,$, so (\ref{finobB}) renders the
empty set when $\emptyset$ and the whole set are not balls.

\newcommand{\lit}[1]{\bibitem{#1}}

\end{document}